\newtheorem{theorem}{Theorem}[section]
\newtheorem{cor}[theorem]{Corollary}
\newtheorem{lem}[theorem]{Lemma}
\newtheorem{pro}[theorem]{Proposition}
\newtheorem{remark}[theorem]{Remark}
\newtheorem{remarks}[theorem]{Remarks}
\newtheorem{Def}[theorem]{Definition}
\newtheorem{conjecture}[theorem]{Conjecture}
\theoremstyle{definition}
\renewcommand{\qed}{\mbox{}\hfill\openbox}
\newcommand{\mfd}{manifold}
\newcommand{\nbhd}{neighbourhood}
\newcommand{\be}{\begin{equation}}
\newcommand{\ene}{\end{equation}}
\newcommand{\br}{\begin{remark}}
\newcommand{\er}{\end{remark}}
\newcommand{\bl}{\begin{lem}}
\newcommand{\el}{\end{lem}}
\newcommand{\bcor}{\begin{cor}}
\newcommand{\ecor}{\end{cor}}
\newcommand{\bpro}{\begin{pro}}
\newcommand{\epro}{\end{pro}}
\newcommand{\ben}{\begin{enumerate}}
\newcommand{\een}{\end{enumerate}}
\newcommand{\bp}{\begin{proof}}
\newcommand{\ep}{\end{proof}}
\newcommand{\bpo}{\begin{pro}}
\newcommand{\epo}{\end{pro}}
\newcommand{\beq}{\begin{equation*}}
\newcommand{\eeq}{\end{equation*}}
\newcommand{\bear}{\begin{eqnarray}}
\newcommand{\eear}{\end{eqnarray}}
\newcommand{\beqar}{\begin{eqnarray*}}
\newcommand{\eeqar}{\end{eqnarray*}}
\newcommand{\brem}{\begin{remark*}}
\newcommand{\erem}{\end{remark*}}
\newcommand{\bt}{\begin{theorem}}
\newcommand{\et}{\end{theorem}}
\newcommand{\R}{\mathbb{R}}
\newcommand{\N}{\mathbb{N}}
\renewcommand{\H}{\mathbb{H}}
\newcommand{\sF}{\mathcal{F}}
\DeclareMathOperator{\Hess}{Hess}%
\DeclareMathOperator{\Hom}{Hom}
\DeclareMathOperator{\Sym}{Sym}
\renewcommand{\qed}{\mbox{}\hfill\openbox}
\numberwithin{equation}{section}
\def\XXint#1#2#3{{\setbox0=\hbox{$#1{#2#3}{\int}$}
    \vcenter{\hbox{$#2#3$}}\kern-.5\wd0}}
\def\@citestyle{\m@th\upshape\mdseries}
\def\citeform#1{{\bfseries#1}}
\def\@cite#1#2{{%
  \@citestyle[\citeform{#1}\if@tempswa, #2\fi]}}
\let\csname cite \endcsname\cite
  \edef\cite{\@nx\protect\@xp\@nx\csname cite \endcsname}%
\begin{document}
\title[No geometric minimal foliations]{Non-Existence of Geometric Minimal Foliations in Hyperbolic Three-Manifolds}
%\title[Riemann Surfaces]{Riemann Surfaces}

%------------------------------------------------------

\author{Michael Wolf}
\address[M. ~W.]{Dept. of Mathematics, Rice University, Houston, TX 77005-1892, USA}
\email{mwolf@rice.edu}

\author{Yunhui Wu}
\address[Y. ~W.]{ Yau Mathematical Sciences Center, Tsinghua University, Beijing 100084, China}
\email{yunhui\_wu@mail.tsinghua.edu.cn}

\date{Compiled \today\ }

\subjclass{30F60, 32G15}
\keywords{Hyperbolic three-manifold, minimal surface, foliation}

\maketitle

\section{Introduction}

The goal of this paper is to prove the following
%a partial result in this direction.
\begin{theorem}\label{thm: main cor}
	Let $M$ be a three-dimensional closed hyperbolic manifold. Then there does not exist a geometric foliation of $M$ by closed minimal surfaces of genus $g>1$.
\end{theorem}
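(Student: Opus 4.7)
The plan is to use the geometric hypothesis together with the Riccati evolution of the shape operator to show that every leaf of the foliation must have constant principal curvatures $\pm 1$, and then to contradict this with a basic fact about holomorphic quadratic differentials on a closed Riemann surface of genus $g>1$.

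First I would exploit the ``geometric'' hypothesis to parametrize a flow box by the unit normal geodesic flow. The desired upshot is that the unit normal vector field $N$ to the foliation $\mathcal{F}$ is geodesic ($\nabla_N N = 0$), so leaves are locally equidistant and in a flow box the ambient metric splits as $g = g_t + dt^2$, where $g_t$ is the induced metric on the leaf $\Sigma_t$. Extracting this splitting from whatever precise notion of ``geometric'' is adopted is the main setup hurdle; everything downstream is then standard.

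Next, take the trace of the Riccati evolution of the leaf shape operator $A_t$ along the normal flow, which gives
\[
\partial_t H_t + |A_t|^2 + \mathrm{Ric}_M(N,N) = 0.
\]
Minimality of every leaf forces $H_t \equiv 0$ for all $t$, hence $\partial_t H_t \equiv 0$; and $\mathrm{Ric}_M(N,N) = -2$ because $M$ is hyperbolic. Consequently $|A|^2 \equiv 2$ at every point of every leaf, so the two principal curvatures of each leaf are identically $\pm 1$. Equivalently, by the Gauss equation, every leaf has constant intrinsic curvature $K \equiv -2$.

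Finally, I would translate the curvature constraint into a statement about the Hopf differential. On a minimal surface $\Sigma$ in a hyperbolic $3$-manifold, the traceless second fundamental form is the real part of a holomorphic quadratic differential $\Phi$ on $\Sigma$ with respect to the induced conformal structure, and the pointwise norm of $\Phi$ relative to the induced metric equals $|\lambda|^2$. The identity $\lambda \equiv 1$ therefore forces $\Phi$ to have constant pointwise norm $1$, and in particular $\Phi$ is nowhere vanishing. But any nonzero holomorphic quadratic differential on a closed Riemann surface of genus $g > 1$ has exactly $4g-4$ zeros counted with multiplicity, so cannot be nowhere zero. This contradiction rules out the existence of the foliation and proves Theorem~\ref{thm: main cor}.
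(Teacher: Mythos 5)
Your opening step is a genuine gap, and it is fatal to the rest of the argument. The paper's notion of \emph{geometric} (Definition~\ref{c2-fo}) requires only that the normal speed $f=\langle (h_t)_*\tfrac{\partial}{\partial t},\vec{n}\rangle$ be a function of $s=\|A\|^2$ on the leaf; it does not require $f$ to be constant. Since $s$ is genuinely nonconstant on any closed minimal leaf (by Remark~\ref{remark-1}, $A$ must vanish somewhere but cannot vanish identically on every leaf of a foliation), $f$ is in general nonconstant, the unit normal field is not geodesic, the leaves are not equidistant, and there is no splitting $g=g_t+dt^2$. With a nonconstant normal speed the traced Riccati identity acquires a Laplacian term: the correct evolution is $\partial_t H=\Delta f+\bigl(\|A\|^2+\mathrm{Ric}_M(N,N)\bigr)f$, so minimality of all leaves yields only the Jacobi equation $\Delta f=(2-\|A\|^2)f$ (the paper's Proposition~\ref{pr-m}), not the pointwise identity $\|A\|^2\equiv 2$. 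Note also that $\|A\|^2\equiv 2$ is already impossible on a closed minimal leaf for an elementary reason (the maximum principle applied to the Simons identity forces $A$ to vanish somewhere), which is a sign that no reasonable foliation hypothesis could deliver it; your Hopf-differential endgame is correct but is being fed a conclusion you cannot reach.

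For comparison, the paper's route is necessarily more delicate: it couples the Jacobi equation $\Delta f=(2-s)f$ with the Simons identity $\Delta\log s=-2(2+s)$, uses the hypothesis $f=f(s)$ and the chain rule to eliminate $\Delta s$ and express $\|\nabla s\|^2$ as a function of $s$ alone, deduces that the critical set of $s$ is a union of level sets and that every nonzero critical value is a global maximum (Lemmas~\ref{l-level} and~\ref{l-maximal}, Corollary~\ref{critical}), and then derives a contradiction with $\chi(S)<0$ by a Morse-theoretic decomposition of $S$. If you want to salvage your approach, you would need to replace the equidistance claim by this PDE analysis; the Riccati/Hopf argument as written only disposes of the very special case of a foliation by equidistant (constant normal speed) minimal surfaces.
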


Of course, our first task will be to define the term {\it geometric} in the statement of the result and also to explain the context. We begin with an explanation of the statement: the theorem asserts that such a foliation cannot occur as an instance of a time-dependent geometric flow, in the sense of say, \cite{HuiskenPolden96}.

Indeed, we prove slightly more, in that we do not use the global structure of the fibration. The main theorem is a special case of the following result.

\begin{theorem}\label{thm: main}
	Let $M$ be a three-dimensional hyperbolic manifold. Let $S$ be a closed surface of genus $g>1$ in $M$, and let $N$ be a neighborhood of $S$ in $M$. Then there does not exist a geometric foliation of $N$ by closed minimal surfaces of genus $g>1$.	
\end{theorem}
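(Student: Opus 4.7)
The plan is to convert the existence of the foliation into a positive solution of the Jacobi equation on each minimal leaf, extract a Rayleigh-type area bound, and then close the argument by combining the extra constraints from the ``geometric'' (Huisken-Polden) nature of the flow with the rigidity of the resulting borderline case.

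First, I would parametrize the foliation as $F\colon S\times(-\varepsilon,\varepsilon)\to N$, with leaves $S_t=F(\cdot,t)$, unit normals $\nu_t$, and normal speed $f_t:=\langle\partial_tF,\nu_t\rangle$. After possibly reversing the orientation of $t$, we may assume $f_t>0$ on every leaf. Since each $S_t$ is minimal, differentiating the identity $H_t\equiv 0$ in $t$ gives the Jacobi equation on each leaf,
\begin{equation*}
  \Delta_{g_t}f_t+\bigl(|A_t|^2-2\bigr)f_t=0,
\end{equation*}
where we have used $|A_t|^2=2\lambda_t^2$ by minimality and $\mathrm{Ric}_M(\nu_t,\nu_t)=-2$ in the hyperbolic ambient. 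Because $f_t>0$, it is the principal eigenfunction of the Jacobi operator $J_t$ and $\lambda_1(-J_t)=0$, so every leaf is weakly stable. Testing the Rayleigh quotient of $-J_t$ with $\phi\equiv 1$, and pairing the Gauss equation $K_t=-1-\lambda_t^2$ with Gauss-Bonnet on $S_t$, gives
\begin{equation*}
  \Area(S_t)\ge 2\pi(g-1),\qquad \int_{S_t}\lambda_t^2\,dA_t\le 2\pi(g-1),
\end{equation*}
with simultaneous equality iff $f_t$ is constant and $\lambda_t\equiv 1$ on $S_t$. Substituting $u_t:=\log f_t$ into the Jacobi equation and integrating refines this to
\begin{equation*}
  \int_{S_t}|\nabla u_t|^2\,dA_t=4\,\Area(S_t)-8\pi(g-1),
\end{equation*}
so the defect from the borderline case is quantified leaf by leaf by $\|\nabla u_t\|_{L^2}$.

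To close the argument I would use the hypothesis that the foliation is geometric in the sense of Huisken-Polden: the velocity $f_t$ is not an arbitrary positive Jacobi field but a universal geometric functional of the leaf itself. Combined with the standard evolution equations $\partial_tg_t=-2f_tA_t$ and $\partial_tA_t=-\Hess_{g_t}f_t+f_tA_t^2-f_tg_t$ along a normal variation in a hyperbolic ambient, differentiating $J_tf_t=0$ in $t$ produces a second pointwise identity linking $\Hess_{g_t}f_t$, $A_t$, $\lambda_t$ and $\dot f_t$. Integrating this identity over $S_t$ against $f_t$ and combining with the Rayleigh/Gauss-Bonnet estimate above is designed to force the borderline equality on every leaf, i.e.\ $\lambda_t\equiv 1$ and $f_t\equiv$ constant. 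In that rigid scenario the Codazzi equations for the foliating family, together with minimality and the constant Gauss curvature $K_t\equiv -2$, would identify $N$ with a very restricted warped product, contradicting the hyperbolicity of the ambient metric throughout $N$.

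The main obstacle is this last step: the Rayleigh estimate is one-sided and, on a single leaf, is compatible with $\Area(S_t)$ strictly above $2\pi(g-1)$. The geometric hypothesis must therefore be used essentially to link the positive Jacobi fields $f_t$ across different leaves, and to force the borderline equality on \emph{every} leaf. Extracting the correct closing integral identity from the evolution of $J_tf_t=0$ in $t$, and then ruling out the borderline rigid case inside a hyperbolic $3$-manifold, is where the real work lies.
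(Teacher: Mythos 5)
Your first two steps are sound and reproduce the paper's Proposition 3.1: differentiating minimality in $t$ gives the Jacobi equation $\Delta f_t+(\|A_t\|^2-2)f_t=0$, and your stability/Rayleigh consequences ($\Area(S_t)\ge 2\pi(g-1)$ and $\int_{S_t}|\nabla \log f_t|^2\,dA_t=4\Area(S_t)-8\pi(g-1)$) follow correctly from Gauss--Bonnet. But everything up to that point holds for \emph{any} minimal foliation (indeed for any two-sided stable minimal leaf), so it cannot by itself yield a contradiction; the entire burden falls on the ``closing identity,'' which you describe only as a plan. That is a genuine gap: you never exhibit the identity obtained by differentiating $J_tf_t=0$ in $t$, and there is no reason an integrated version of it should force equality in a one-sided Rayleigh estimate on every leaf --- as you yourself note, the estimate is perfectly compatible with strict inequality. (Incidentally, if you could force the borderline case $\|A\|^2\equiv 2$, the contradiction would be immediate from the Simons identity $\Delta\log\|A\|^2=-2(2+\|A\|^2)$, with no need for a warped-product rigidity analysis; but the forcing step is precisely what is missing.)

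The paper exploits the geometric hypothesis in a different, pointwise rather than integral, way. Writing $s=\|A\|^2$ and using that $f=f(s)$, the chain rule gives $\Delta f=f''(s)\,\|\nabla s\|^2+f'(s)\,\Delta s$; combining this with the Simons identity, which yields $s\,\Delta s-\|\nabla s\|^2=-2s^2(2+s)$, eliminates $\Delta s$ and expresses $\|\nabla s\|^2$ as an explicit function of $s$ alone. Hence the critical set of $s$ is a union of level sets, every nonzero critical value is a global maximum (since $\Delta s=-2s(2+s)<0$ at such points), and an Euler-characteristic count of a neighborhood of $s^{-1}(\max s)$ together with its complement forces $\chi(S)\ge 1$, contradicting $g\ge 2$. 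If you want to salvage your outline, the hypothesis must be inserted at the level of this pointwise functional relation between $f$ and $s$ on a single leaf, not through an integral identity linking different leaves.
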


\begin{remarks} 
\noindent(i) The \mfd \ $M$ in the theorem above does not need to be closed. An example in section~\ref{sec: example} shows that the necessity of the hypothesis that $S$ be closed.

\noindent(ii) The restriction on the genus of the surface $S$ in Theorem~\ref{thm: main} is somewhat superfluous, as minimal surfaces are always saddle-shaped in their ambient spaces: thus, a hyperbolic three-manifold induces on a minimal submanifold a metric of curvature at most $-1$, forcing $S$ to be of hyperbolic type.

\noindent(iii) Because Theorem~\ref{thm: main cor} is an immediate corollary of Theorem~\ref{thm: main}, we see that the exclusion of minimal geometric flows does not depend directly on global dynamical qualities of the flow.
\end{remarks}

Our definition of {\it geometric foliation} relates to the following perspective.  Of course, a foliation $\sF$ of the three-manifold $M$ denotes the decomposition of the manifold $M$ into leaves $F \in \sF$ which are homeomorphic to $S$ with the following property: for every point $p \in M$, there is a neighborhood $U$ of $p$ so that $U$ is covered by the image of a map $F: (-\epsilon, \epsilon)  \times S \to M$, where $F_t=F( \{t\} \times S)$, a leaf of $\sF$, is disjoint from other leaves $F_{t'}$ when $t$ and $t'$ are distinct times.

Because the foliation has leaves of codimension one, it is possible to arrange the mappings $F$ so that the pushforward vectors $\nu = F_*\frac{\partial}{\partial t}$ are normal to the image leaf $F_{t_0}$ for each $t_0 \in (-\epsilon, \epsilon)$.

Then in this setting, a foliation $\sF$ of $M$ is {\it geometric} if the norm $\|\nu\|= \|F_*\frac{\partial}{\partial t}\|$ depends only on the local geometry of the leaf $F_{t_0}=F( \{t_0\}\times S)$, i.e. $\nu$ depends only on the first and second fundamental forms of the leaves of $\sF$. 

Note that in the case where $M$ is hyperbolic and the leaves of $\sF$ are minimal (so that the principal curvatures are additive inverses of one another), the condition that $\nu = \nu(p)$ depends only on the first and second fundamental forms is equivalent to the existence of a function $f = f(\lambda)$ so that $\nu= \nu(p)= \nu(f(\lambda(p)))$ depends only on the size of the principal curvature $\lambda(p)$ of the leaf of $\sF$ through $p$. Thus we may succinctly state the criterion for a foliation to be {\it geometric} as follows.

\begin{Def}\label{c2-fo}
Let $M$ be a three dimensional hyperbolic manifold. We say that \emph{$M$ contains a locally geometric $1$-parameter family of closed minimal surfaces} (or, more briefly, that the minimal foliation is \emph{geometric}) if there exists a closed surface $S$, a constant $\epsilon>0$ and an embedding
\[h:(-\epsilon,\epsilon)\times S \to M\]
such that

\ben
\item the function $h$ is $C^2$ with respect to both $t$ and $p \in S$.

\item for every $t$, each leaf $h_t(\cdot):=h(t, \cdot)\subset M$ is a minimal surface.

\item for any $p \in S$, the function $f(t,p)=<(h_t)_{*}(\frac{\partial}{\partial t}),\vec{n}>|_{t=0}$ only depends on the principal curvature of $S$ at $p$. One may write as $f(0,p)=f(0,||A||^2(p))$ where $||A||^2(p)$ is the square of the second fundamental form of $\{0\} \times S$ at $(0,p)$ in $M$.

\item For time $t=0$, the function $f(0, \cdot): S \to \R$ does not vanish identically.
\een
\end{Def}

In particular, such a foliation would satisfy conditions (1.1) in \cite[Page 45]{HuiskenPolden96} for a time-dependent (i.e. allowed to vary as the leaves vary) geometric flow.

Of course, this definition provides a strong restriction on the possible foliations that are excluded by Theorem~\ref{thm: main cor}. On the other hand, the theorem does rule out foliations defined by local geometric rules, even ones that change from leaf to leaf.  

\subsection{The mathematical and historical context.} Interest in the problem of whether one could possibly foliate a closed hyperbolic three-manifold by minimal surfaces dates back to a paper by Anderson, and in particular to a  conjecture he states \cite[Page 289]{And-83} (see also \cite{Cal-03, Rub-07, Uhl-83} for alternative expressions): 

\begin{conjecture}[Anderson]
If $M$ is a three-dimensional closed hyperbolic manifold, then there does not exist a local $1$-parameter family of closed minimal surfaces in $M$.
\end{conjecture}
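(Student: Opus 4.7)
The plan is to reduce the problem to showing that every leaf of such a family is marginally stable, and then to contradict the resulting overdetermined system. First I would normalize the family $F_t : S \to M$ so that the variation field is normal to each leaf, $\partial_t F_t = u_t \nu_t$, setting $S_t := F_t(S)$. Linearizing the minimality condition along $t$, and using that $\mathrm{Ric}_M(\nu,\nu) = -2$ in a hyperbolic three-manifold, yields the Jacobi equation on each leaf,
\[
L_t u_t \;:=\; \Delta_{g_t} u_t + (|A_t|^2 - 2)\, u_t \;=\; 0,
\]
where $g_t$ and $A_t$ denote the induced metric and second fundamental form of $S_t$.

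Because a genuine one-parameter family of closed surfaces has locally disjoint leaves, $u_t$ has fixed sign on each leaf; without loss of generality $u_t > 0$. The maximum principle for self-adjoint elliptic operators on closed surfaces then forces $0$ to be the principal eigenvalue of $L_t$ on \emph{every} leaf, with $u_t$ spanning the principal eigenspace. Setting $w_t := \log u_t$ and dividing $L_t u_t = 0$ by $u_t$ gives $\Delta_{g_t} w_t + |\nabla w_t|^2 = 2 - |A_t|^2$. The Gauss equation (principal curvatures $\pm \kappa_t$, ambient curvature $-1$) gives $|A_t|^2 = -2K_t - 2$; integrating and using Gauss--Bonnet produces the rigid identity
\[
\int_{S_t} |\nabla w_t|^2 \, dA_t \;=\; 4\, \Area(S_t) - 8\pi(g-1),
\]
which pins the leaf area from below, $\Area(S_t) \geq 2\pi(g-1)$, with equality only if $u_t$ is constant and $S_t$ has constant Gauss curvature $-2$.

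The final step is to differentiate $\lambda_1(L_t) \equiv 0$ in $t$. Hadamard's formula for the first variation of a simple eigenvalue, combined with the Simons-type evolution of $|A_t|^2$ under a normal minimal variation (which involves $\Hess\, u_t$), yields a second integral relation on each leaf. Iterating the identity in $t$ ought to overdetermine the system and force $\nabla A_t \equiv 0$, identifying each leaf as totally geodesic; since a closed hyperbolic three-manifold contains only finitely many closed totally geodesic surfaces, these cannot vary in a nontrivial $C^2$ family, delivering the contradiction.

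The principal obstacle --- and the reason Anderson's conjecture is not yet fully settled --- is that the Jacobi field $u_t$ has no prescribed $t$-dependence: on each leaf it is determined only up to a multiplicative scalar, so the higher variations of $\lambda_1$ contain gauge functions that must be eliminated before the rigidity argument can close. The present paper bypasses exactly this gauge issue by adding the \emph{geometric} hypothesis that $u_0$ is pointwise a function of $|A|^2$, which pins the scale leaf-to-leaf and permits the overdetermined-system step to be carried out.
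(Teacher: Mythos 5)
The statement you were asked to prove is Anderson's conjecture, which the paper does not prove: it is stated there precisely as an open conjecture, and the paper's actual theorems (Theorems~1.1 and~1.2) establish only the special case in which the foliation is \emph{geometric} in the sense of Definition~1.3. Your proposal, to its credit, is honest about this --- but that honesty means it is not a proof. The decisive step, ``iterating the identity in $t$ ought to overdetermine the system and force $\nabla A_t \equiv 0$,'' is exactly the step that is missing, and you yourself identify why it cannot currently be carried out: the Jacobi field $u_t$ is determined only up to a leafwise scalar, so the higher $t$-derivatives of $\lambda_1(L_t)$ contain unremovable gauge terms. A program whose key step is flagged as not closable is a gap, not an argument. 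There is also a factual error in the endgame: it is \emph{not} true that a closed hyperbolic three-manifold contains only finitely many closed totally geodesic surfaces (arithmetic examples can contain infinitely many); what is true, and what you would actually need, is that totally geodesic closed surfaces are rigid and hence cannot occur in a nontrivial $C^2$ one-parameter family.

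The salvageable parts of your write-up do overlap with the paper's machinery for the special case it does prove. Your Jacobi equation $\Delta_{g_t} u_t + (|A_t|^2-2)u_t = 0$ is precisely Proposition~3.1 of the paper ($\Delta f = (2-\|A\|^2)f$), and your computation $\int_{S_t}|\nabla w_t|^2\,dA_t = 4\Area(S_t) - 8\pi(g-1)$ is correct and gives the area lower bound you state. But the paper's route from there is different and crucially uses the extra hypothesis: it pairs the Jacobi equation with the Simons identity $\Delta\log\|A\|^2 = -2(2+\|A\|^2)$ (Theorem~2.2), uses the geometric assumption $f = f(s)$ with $s=\|A\|^2$ to eliminate $\Delta s$ and express $\|\nabla s\|^2$ as a function of $s$ alone, deduces that critical sets of $s$ are level sets and that nonzero critical values are global maxima, and then derives a topological contradiction via an Euler characteristic count. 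None of that transfers to the unrestricted conjecture, for exactly the gauge reason you name. If you want to submit something provable, restrict the claim to the geometric case and carry out the elimination argument; as written, the proposal does not establish the stated conjecture.
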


Since this conjecture was identified, there have been a few partial results. Both Hass \cite{Has-15} and Huang-Wang \cite{HW-15} have found hyperbolic three-manifolds which fiber over the circle but do not admit any minimal fibration.

The result in Theorem~\ref{thm: main cor} represents something of a different approach to the main problem in that the extra conditions it imposes are on the foliation on any such manifold, rather than on (any foliation on) some particular class of closed three-manifolds.  

\subsection{Method.}

Since the hypothesis we add to the conjecture is a restriction on the foliation, naturally our proof of Theorem~\ref{thm: main} relies on an analysis of the equations governing the geometry of such foliations. We imagine the foliation as determining a flow of minimal surfaces in a hyperbolic three-manifold determined by a function of the local geometry of the minimal surface at a point. Naturally, the geometry of the surface in a hyperbolic three-manifold is determined by its first and second fundamental forms.  Those forms, on any particular minimal leaf, are constrained by Gau\ss's equation and the Simons equations. Most of our interest focuses on the Simons equation on the second fundamental form.   

On the other hand, that the foliation may be construed as a geometric flow provides for a second equation governing the size of the flow vectors.

We then show that these two equations together preclude the existence of the foliation. A brief analysis of this pair of equations results in restrictions on the function $s=\|A\|^2$ (where $A$ is the second fundamental form) and its derivatives  which are not satisfiable on a closed surface. As there are geometric flows on open surfaces (see section~\ref{sec: example}), this last step necessarily uses some topology of closed surfaces: in this case that is some elementary Morse theory on the level sets of the function $s$ in the setting where $s$ is analytic but the Hessian of $s$ does not vanish identically.

\subsection{Organization.} We present the Simons formula in  section~\ref{sec: simons}, and the formula governing the flow in section~\ref{sec: geometric foliation}.   In section~\ref{sec: proof}, we combine these formulas to prove the main result. That section  begins with the governing equations providing a restriction on the geometric function $\phi$: in particular we show that the critical sets for the function $s$ are level sets of $s$. We then conclude with an analysis of how those critical sets might be defined on a closed surface. We close in section~\ref{sec: example} with some examples to show that our restriction of the scope of the theorem to geometric foliations by closed surfaces is necessary.

\subsection{Acknowledgements} The authors appreciate several useful conversations with Zheng (Zeno) Huang on this work.
The first author gratefully acknowledges support from the U.S. National Science Foundation through grant DMS 1564374. He also acknowledges support from U.S. National Science Foundation grants DMS 1107452, 1107263, 1107367 RNMS: Geometric structures And Representation varieties (the GEAR Network). The second author is partially supported by China's Recruitment Program of Global Experts. And he also would like to thank the Department of Mathematics at Rice University where this joint work was partially completed.

%%%%%%%%%%%%%%%%%%%%%%%%%%%%%%%%%%%%%%%%
\section{A Simons Identity}\label{sec: simons}
In this section we apply the Simons identity \cite{Simons-68} to our setting.

Let $M$ be a three-dimensional hyperbolic manifold and $S\subset M$ be an immersed minimal surface. Let $A$ be the second fundamental form of $S$ in $M$ and let $\nabla$ be the covariant derivative with respect to the induced metric on $S$. Let $T(S)$ and $N(S)$ denote the tangent and normal bundles of $S$, respectively; let $\Sym(S)$ denote the bundle of symmetric transformations of $T(S)$, and $H(M)=\Hom(N(S), \Sym(S))$ . We refer to \cite{Simons-68} for the description of some of the objects we use below, in particular, the various operators $\tilde{A} \in \Gamma(\Hom(N(S),N(S))),\underset{\sim}{A} \in \Gamma(\Hom(\Sym(S), \Sym(S)))$, and $B \in \Gamma(\Sym(S)\otimes N(S))$ related to the second fundamental form $A \in \Gamma(H(S))$ used in the next proposition, an adaptation of a computation of Simons \cite{Simons-68}.

\bpro\label{kp}
\[\nabla^2 A=-2A-||A||^2A\]
where $||A||^2$ is the square of the norm of the second fundamental form $A$.
\epro

\bp
Since $M$ is hyperbolic, in particular it is symmetric. So $\overline{R}'$, defined in \cite[Equation (4.2.1)]{Simons-68}, vanishes. The fundamental identity of Simons \cite[Theorem 4.2.1]{Simons-68} is
\be \label{n-1}
\nabla^2 A=-A\circ \tilde{A}- \underset{\sim}{A}\circ A+\overline{R}(A).
\ene
where we will soon recall the definition of $\overline{R}(A)$.

We next apply the argument in the proof of \cite[Theorem 5.3.1]{Simons-68}. Since $S\subset M$ is of codimension $1$, from the definitions of $\tilde{A}$ and $\underset{\sim}{A}$, we obtain that
\be \label{n-2}
\underset{\sim}{A}\circ A=0 \quad \emph{and} \quad A\circ \tilde{A}=||A||^2A.
\ene

The term $\overline{R}(A)$ in \eqref{n-1} is defined by \cite[Equation (4.2.2)]{Simons-68}. We will show below that 
\be \label{n-3}
\overline{R}(A)=-2A.
\ene

Using this formula above, the conclusion then follows from \eqref{n-1}, \eqref{n-2} and \eqref{n-3}. \qed

\emph{Proof of  \eqref{n-3}.} Recall that $M$ has constant curvature $-1$. Hence, for $p \in S$ and $v_1, v_2, v_3 \in T_p(S)$ we have
\be \label{c-e}
\overline{R}_{v_1,v_2}v_3=<v_1,v_3>v_2-<v_2,v_3>v_1.
\ene

Let $e_1,e_2$ be an unit frame in $T_p(S)$, $w$ be the unit normal direction of $S$ in $M$ at $p$ and $B(\cdot, \cdot)$ be defined in \cite[Equation (2.2.2)]{Simons-68}. Pick $x,y\in T_p(S)$. We use \eqref{c-e} to estimate the terms in \cite[Equation (4.2.2)]{Simons-68}. In our setting the dimension of the ambient manifold is $3$ and the submanifold $S$ is of codmension $1$. Now, Simons defines \cite[Equation (4.2.2)]{Simons-68} the operator $\overline{R}(A)$ via its action as
\bear \label{eq-Simons}
\
\\
<\overline{R}^w(A)(x),y>=& \sum_{i=1}^2 (2<\overline{R}_{e_i,y}B(x,e_i),w>+2<\overline{R}_{e_i,x}B(y,e_i),w>\nonumber \\
& -<A^w(x),\overline{R}_{e_i,y}e_i>-<A^w(y),\overline{R}_{e_i,x}e_i> \nonumber \\
&+<\overline{R}_{e_i,B(x,y)}e_i,w>-2<A^w(e_i),\overline{R}_{e_i,x}y>).\nonumber
\eear

Since $B(\cdot,\cdot)$ is orthogonal to $T_p(S)$ by the definition, by \eqref{c-e} we have
\beqar
<\overline{R}_{e_i,y}B(x,e_i),w>&=&0 \\
<\overline{R}_{e_i,x}B(y,e_i),w>&=&0 \\
<A^w(x),\overline{R}_{e_i,y}e_i>&=&<A^w(x),y>-<A^w(x),e_i><y,e_i>\\
<A^w(y),\overline{R}_{e_i,x}e_i>&=&<A^w(y),x>-<A^w(y),e_i><x,e_i>\\
<\overline{R}_{e_i,B(x,y)}e_i,w>&=&<B(x,y),w>=<A^w(x),y>\\
<A^w(e_i),\overline{R}_{e_i,x}y>&=&<A^w(e_i),x><e_i,y>-<A^w(e_i),e_i><x,y>\\
&=&<A^w(x),e_i><e_i,y>.
\eeqar

Substituting the equations above into \eqref{eq-Simons} we obtain 
\beqar
<\overline{R}^w(A)(x),y>&=& \sum_{i=1}^2(-<A^w(x),y>+<A^w(x),e_i><y,e_i>\\
&-&<A^w(y),x>+<A^w(y),e_i><x,e_i>\\
&+&<A^w(x),y>-2<A^w(x),e_i><e_i,y>)\\
&=& -2 <A^w(x),y>\\
&+&\sum_{i=1}^2(<A^w(y),e_i><x,e_i>-<A^w(x),e_i><e_i,y>).
\eeqar

Since
\beqar
\sum_{i=1}^2<A^w(y),e_i><x,e_i>&=&<A^w(y),x> \ \emph{and} \\
 \sum_{i=1}^2<A^w(x),e_i><e_i,y>&=&<A^w(x),y>,
\eeqar

we have
\beqar
<\overline{R}^w(A)(x),y>=-2 <A^w(x),y>.
\eeqar
  
This then proves \eqref{n-3}.
\ep

We interpret Proposition~\ref{kp} into a form that will be more convenient for us.
Let $\Delta$ be the Laplace operator with respect to the induced metric on $S$.
\bt \label{sim-o}
Let $S\subset M$ be an immersed minimal surface where $M$ is three dimensional hyperbolic and $K_S$ be the Gauss curvature of $S$. Then,
away from zeros of $||A||$ we have

\begin{equation}\label{eqn: Laplacian log s}
\Delta \log (||A||^2)=-4 K_S=-2(2+||A||^2).
\end{equation}
\et
\bp
We denote $||A||^2$ by $s$. From the chain rule and Proposition \ref{kp} we have
\bear \label{l-1-1}
\Delta s&=&2<\nabla^2 A,A>+2<\nabla A, \nabla A>\\
&=& -2(2+s)s+2<\nabla A, \nabla A>.\nonumber
\eear

Let $\{e_1,e_2,e_3\}$ be an unit frame at $p \in S$ such that $e_3$ is normal to $S$. Then the second fundamental form $A$ can be written as 
\[A=\sum_{1\leq i, j \leq 2} h_{ij}w_iw_j e_3.\]

Since $S$ is minimal, $h_{11}+h_{22}=0$. Thus,
\[h_{11,k}+h_{22,k}=0.\]

The Gauss-Codazzi equation gives that
\[h_{ij,k}=h_{ik,j}.\]

Thus,
\beqar 
<\nabla A, \nabla A>&=&\sum_{1\leq i,j,k\leq 2}h^2_{ij,k}\\
&=&4h^2_{11,1}+4h^2_{11,2}.
\eeqar

Let $p \in S$ with $s(p)\neq 0$. We may assume that $h_{ij}(p)=\lambda_i \delta_{ij}$ where $\lambda_i$ are principal curvatures. Then
\beqar 
<\nabla s, \nabla s>&=&4\sum_{1 \leq k \leq 2}(\sum_{1\leq i,j\leq 2}h_{ij}h_{ij,k})^2\\
&=&4\sum_{1 \leq k \leq 2}(\lambda_1 h_{11,k}+\lambda_2 h_{22,k})^2.
\eeqar

Since $h_{11,k}+h_{22,k}=0$ and $\lambda_1+\lambda_2=0$, we have
\beqar 
<\nabla s, \nabla s>&=&4\sum_{1 \leq k \leq 2}(\lambda_1-\lambda_2)^2 h^2_{11,k}\\
&=& 8 s \sum_{1 \leq k \leq 2} h^2_{11,k}.
\eeqar

Thus,
\be \label{l-1-2}
<\nabla s, \nabla s>=2s<\nabla A, \nabla A>.
\ene

From \eqref{l-1-1}) and \eqref{l-1-2} we know that away from zeros of $s$,

\bear \label{l-1-3}
\Delta s= -2(2+s)s+\frac{<\nabla s, \nabla s>}{s}.
\eear

Thus, we have that away from zeros of $s$,

\bear \label{l-1-4}
\Delta \log {s}&=&\frac{\Delta s}{s}-\frac{<\nabla s, \nabla s>}{s^2}\\
&=&-2(2+s). \nonumber
\eear

Since $S\subset M$ is minimal, the Gauss equation tells that
\[K_S=-1-s/2.\]

Thus,
\bear \label{l-1-5}
\Delta \log {s}=4K_{S} = -2(2+s).
\eear
\ep

\br \label{remark-1}
If $S$ is closed, the maximum principle together with Theorem \ref{sim-o} yields that the second fundamental form must vanish at some point in $S$. It is well-known \cite{Hopf} that the second fundamental form $A$ can be viewed as the real part of a holomorphic quadratic form on $S$. Thus, $A$ has only finitely many zeros if $S$ is compact.
\er

%%%%%%%%%%%%%%%%%%%%%%%%%%%%%%%%%%%%%%%

\section{An equation for a minimal foliation}\label{sec: geometric foliation}

In addition to equation \eqref{eqn: Laplacian log s}, we will need an equation governing the size of the flow vector: deriving that relationship is the goal of this section.

Let $M$ be a three-dimensional hyperbolic manifold. Assume that there exists a local one-parameter family of minimal surfaces in $M$. More precisely, let $\epsilon>0$, let $S$ be a surface and suppose there exists a differentiable embedding 
\[h:(-\epsilon,\epsilon)\times S \to M\]
such that for every $t$, each leaf $h_t(\cdot):=h(t, \cdot)\subset M$ is a (distinct) minimal surface.

Denote $h(0,S)$ by $S$ for simplicity. Let $\vec{n}$ be the unit normal vector field on $S$. Then there exists a positive function $f\in C^{2}(S)$ such that 
\begin{equation} \label{eqn: laplacian f}
((h_0)_{*}(\frac{\partial}{\partial t}))^{\perp}=f\cdot \vec{n}.	
\end{equation}  
where we have indicated by $\perp$ the projection to the normal bundle to the leaf.

\bpro \label{pr-m}
\[\Delta f=(2-||A||^2)f.\]
\epro

\bp
We use the same notations as in \cite{Simons-68}.

It follows from \cite[Theorem 3.3.1]{Simons-68} that $f\cdot \vec{n}$ is a Jacobi field. That is, 
\be \label{l-2-0}
\nabla ^2 (f\cdot \vec{n})=\overline{R}(f\cdot \vec{n})-\tilde{A}(f\cdot \vec{n}).
\ene

We next use that $M$ has constant curvature $-1$. Hence, for $p \in S$ and $v_1, v_2, v_3 \in T_p(S)$ we have
\be \label{c-e-2}
\overline{R}_{v_1,v_2}v_3=<v_1,v_3>v_2-<v_2,v_3>v_1.
\ene

Let $e_1,e_2$ be an unit frame in $T_p(S)$. It follows from \cite[Equation 3.2.1]{Simons-68} and \eqref{c-e-2} that 

\bear \label{l-2-1}
\overline{R}(f\cdot \vec{n})&=&\sum_{i=1}^2 (\overline{R}_{e_i, f\vec{n}}e_i)^{\perp}\\
&=& \sum_{i=1}^2(f\vec{n}-<e_i,f\vec{n}>e_i)^{\perp} \nonumber \\
&=&2f\cdot \vec{n}. \nonumber
\eear

The term $\tilde{A}$ in \eqref{l-2-1} is defined in \cite[Equation 2.2.5]{Simons-68}. It follows from \cite[Equation 2.2.7]{Simons-68} that 
\bear \label{l-2-2}
\tilde{A}(f\cdot \vec{n})&=&<\tilde{A}(f\cdot \vec{n}),\vec{n}>\vec{n} \\
&=& <\tilde{A}( \vec{n}),\vec{n}>f\cdot \vec{n} \nonumber \\
&=& ||A||^2 f\cdot \vec{n}\nonumber
\eear

It follows from \eqref{l-2-0}, \eqref{l-2-1} and \eqref{l-2-2} that 
\be \label{l-2-4}
\nabla ^2 (f\cdot \vec{n})=(2-||A||^2)(f\cdot \vec{n}).
\ene

On the other hand, after extending $e_1,e_2,\vec{n}$ to vector fields $E_1,E_2, N$ such that they are pairwise orthogonal and 
$\nabla_{E_{i}}E_j(p)=0$ and $\nabla_{E_{i}}N(p)=0$, it then follows from \cite[Proposition 1.2.1]{Simons-68} that, evaluated at $p$, we have
\bear  \label{l-2-5}
\nabla ^2 (f\cdot \vec{n})&=&\sum_{i=1}^2 \nabla_{E_i}\nabla_{E_i}(f\vec{n}) \\
&=& \Delta(f) \cdot \vec{n}+f \cdot \nabla_{E_i}\nabla_{E_i}(\vec{n}) \nonumber \\
&=& \Delta(f) \cdot \vec{n}. \nonumber
\eear 
In the last equality above we apply that at $p$, $$<\nabla_{E_i}\nabla_{E_i}(\vec{n}),\vec{n}>=-<\nabla_{E_i}(\vec{n}),\nabla_{E_i}(\vec{n})>=0.$$

Thus, it follows from \eqref{l-2-4} and \eqref{l-2-5} that
\be \notag
\Delta f=(2-||A||^2) f
\ene
as desired.
\ep
%%%%%%%%%%%%%%%%%%%%%%%%%%%%%%%%

\section{Proof of Theorem \ref{thm: main}} \label{sec: proof}
In this section we will finish the proof of Theorem \ref{thm: main}. We use the same notations as in the previous sections. 

Let $M$ be a three-dimensional hyperbolic manifold and $S$ be a closed surface. Assume that  
\[h:(-\epsilon,\epsilon)\times S \to M\]
is a local $C^2$ family of minimal surfaces in $M$ which is geometric. That is, 
\ben
\item $h$ is $C^2$ with respect to both $t$ and $p$.

\item $h$ is an embedding.

\item for every $t$, each leaf $h_t(\cdot):=h(t, \cdot)\subset M$ is a minimal surface.

\item for any $p \in S$, the function $f(t,p)=<(h_t)_{*}(\frac{\partial}{\partial t}),\vec{n}>|_{t=0}$ only depends on the principal curvature of $S$ at $p$. One may write as $f(0,p)=f(0,s(p))$ where $s(p)=||A||^2(p)$.

\item For time $t=0$, the function $f(0, \cdot): S \to \R$ does not vanish identically.
\een

Recall that $S=h_0(S)$ and $\Delta$ is the Laplace operator with respect to the induced metric on $S$. Theorem \ref{sim-o} and Proposition \ref{pr-m} then assert that the following system of partial differential equations applies to a geometric foliation of minimal surfaces in a hyperbolic three-manifold:
\bear \label{sys-1}
\begin{cases}
\Delta \log {s}=-2(2+s)\\
\Delta f=(2-s)f.
\end{cases}
\eear

We will now show that this system admits no solutions under our assumptions on the local structure of this hyperbolic manifold near a leaf of the foliation.

First the chain rule gives that 
\bear \label{l-3-1}
\Delta f(0,p)&=&\Delta f(0,s(p))\\
&=& \frac{\partial^2}{\partial s^2}f(0,s) \cdot ||\nabla s||^2+ \frac{\partial}{\partial s}f(0,s) \cdot \Delta s. \nonumber
\eear

So we have
\bear \label{l-3-2}
 \frac{\partial^2}{\partial s^2}f(0,s) \cdot ||\nabla s||^2+ \frac{\partial}{\partial s}f(0,s) \cdot \Delta s=(2-s)f(0,s).
\eear

Recall that Theorem \ref{sim-o} gives that
\bear \label{l-3-3}
s \cdot (\Delta s)-||\nabla s||^2=-2s^2(2+s).
\eear

Eliminating $\Delta s$, we obtain
\bear \label{l-3-4}
 ||\nabla s||^2=\frac{s(2-s)f(0,s)+2s^2(2+s)\cdot \frac{\partial}{\partial s}f(0,s)}{s\cdot \frac{\partial^2}{\partial s^2}f(0,s)+ \frac{\partial}{\partial s}f(0,s)}
\eear
at $(t,s)$ such that $s\cdot \frac{\partial^2}{\partial s^2}f(t,s)+ \frac{\partial}{\partial s}f(t,s)\neq 0$.  We will refine this analysis in the next lemma.

To that end, define $\mathcal{C}:=\{p \in S; \ \nabla s(p)=0\}$ which is the set of critical points of $s$ in $S$.
A direct consequence of \eqref{l-3-4} is 
\bl \label{l-level}
The set $\mathcal{C}$ consists of level subsets of $s:S\to \R^{\geq 0}$. More precisely, assume that $p \in \mathcal{C}$, then for any $q\in S$ with value $s(q)=s(p)$ we have
\[q \in \mathcal{C}.\]
\el

\bp
We begin with the equation $\Delta f=(2-s)f$ from \eqref{sys-1}. First $s$ is analytic on $S$ because the second fundamental form $A$ can be viewed as the real part of a holomorphic quadratic form on $S$ \cite{Hopf}. By classical Schauder theory for elliptic partial differential equations (see \cite[Page 110]{GT-pde} for details) we know that since $f$ satisfies the elliptic PDE \eqref{sys-1}, the solution $f$ is also analytic. 

Set $$\phi_1(s):=s(2-s)f(0,s)+2s^2(2+s)\cdot \frac{\partial}{\partial s}f(0,s)$$
and 
$$\phi_2(s):=s\cdot \frac{\partial^2}{\partial s^2}f(0,s)+ \frac{\partial}{\partial s}f(0,s),$$
so that the real-valued functions $\phi_1(s)$ and $\phi_2(s)$ of $s$ are the numerator and denominator of \eqref{l-3-4}.

Let $p \in \mathcal{C}$. From \eqref{l-3-4} we know that $\phi_1(s(p))=0$.

Case-1. If $\phi_2(s(p))\neq 0$, we are done, as \eqref{l-3-4} displays $||\nabla s||^2$ as a function of only $s$ (and $f(s)$): all points taking on a critical value of $s$ are critical for $s$.

Case-2. If $\phi_2(s(p))=0$, since both $f$ and $s$ are analytic, and since $f$ does not vanish identically (see Definition~\ref{c2-fo}(iv)),  the Taylor expansions at $s(p)$ may be written as
\bear \label{1-equ}
\phi_1(s)=\sum_{k\geq n_1}a_k (s-s(p))^k \ \emph{where $a_{n_1}\neq 0$, for some $n_1 \in \N$}
\eear
and 
\bear \label{2-equ}
\phi_2(s)=\sum_{k\geq n_2}b_k (s-s(p))^k \ \emph{where $b_{n_2}\neq 0$ for some $n_2 \in \N$}.
\eear

It is clear that $||\nabla s||^2$ is smooth on the minimal surface $S$, hence so is $||\nabla s||^2=\frac{\phi_1(s)}{\phi_2(s)}$. In particular, we have
$$n_1 \geq n_2.$$ 

Thus, from \eqref{l-3-4}, \eqref{1-equ} and \eqref{2-equ} we now see that for any $q\in S$ with $s(q)=s(p)$,
\bear
||\nabla s(q)||^2=\frac{a_{n_1}}{b_{n_2}} \ \emph{if $n_1=n_2$, and } ||\nabla s(q)||^2=0 \ \emph{if $n_1 > n_2$}.
\eear

That is, the set of critical points of $s$ is a level subset of $s$. 
\ep

\bl \label{l-maximal}
If $\mathcal{C}$ contains a smooth arc $c$, then for any $p \in c$,
\[s(p)=\max_{q\in S}s(q).\]
\el
\bp 
First by Remark (\ref{remark-1}) we know that $s|_c \neq 0$ since $c$, as an arc, contains a continuum, while the the set $s^{-1}(0)$ is the zero set of the holomorphic quadratic differential on $S$ defined as the complexification of $A$, whose zero set is discrete. Let $p \in c$, $X \in T_p(c)$ and $Y\in T_p(S)$ such that $\{X,Y\}$ extends to a unit frame defined near $p$ in $T_p(S)$ and the vector field $X$ is tangent to that arc $c$. At $p$ we have  
\bear \label{l-3-5}
\Delta s=X X (s)-(\nabla_XX)(s)+Y Y (s) -(\nabla_YY)(s).
\eear
Since $p \in c \subset \mathcal{C}$, we have that since $(\nabla_XX)$ is orthogonal to the arc $c$, and $c$ is critical for $s$, then we must have $(\nabla_XX)(s) (p)=0$. Similarly $(\nabla_YY)(s)=0$, again as $p$ is a critical point for $s$. Lemma \ref{l-level} then gives that $s|_{c}\equiv s(p)$. So $X X (s) (p)=0$. Thus,
\bear \label{l-3-6}
\Delta s(p)=YYs(p).
\eear

We thus conclude from \eqref{l-3-3}, that for $p \in \mathcal{C}$, we have
\bear \label{l-3-7}
YYs(p)= \Delta s(p) =-2s(p)(2+s(p))<0.
\eear

Now, by definition the arc $c \subset \mathcal{C}$ consists of critical points, so since the field $X$ is tangent to that arc $c$, we have 
\bear \label{l-3-8}
XXs(p)=0, Xs(p) = 0 \quad \emph{and} \quad Ys(p)=0.
\eear

It then follows from \eqref{l-3-7} and \eqref{l-3-8} that for any $p \in \mathcal{C}$, the value $s(p)$ is a local maximum. 

Moreover, by Lemma \ref{l-level}, for any $p \in \mathcal{C}$ the value $s(p)$ is also a global maximum: to see this, connect $p$ to a global maximum by a path, say $\Gamma$, whose initial point is at $p$ and whose terminal point is the global maximum.  Then along the path $\Gamma$, because $p$ is a local maximum for $s$, the value of $s$ first declines then attempts to rise to the value for the global maximum: the intermediate value theorem then provides for a later first $q \in \Gamma$ for which $s(q) = s(p)$.  But at that level $s(q)$, we have from \eqref{l-3-4} that $q$ is again a critical point for $s$.  If that point $q \in \Gamma$ is a saddle point, then the level set of $s$ through $q$ locally separates values of $s$ larger than $s(q)$ from those smaller than $s(q)$ and hence contains an arc. Hence the level set of $s$ through $q$ contains an arc and $q$ is a local maximum by the argument above. Iterating this argument yields that the maximum that $s$ can achieve on $\Gamma$ is actually the value $s(p)$, as claimed.
The proof is complete.
\ep

\begin{cor}\label{critical}
If $p \in \mathcal{C}$, then either $s(p)=0$ or $s(p)=\max_{q\in S}s(q).$
\end{cor}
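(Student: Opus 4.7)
The plan is to argue by contradiction using the super-level set of $s$. Assume $p\in\mathcal{C}$ with $v:=s(p)>0$ and, toward a contradiction, that $v<\max_{S}s$.

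First I would observe that $U:=\{q\in S:s(q)>v\}$ is a nonempty proper open subset of the closed surface $S$, so $\partial U\neq\emptyset$ and $\partial U\subset L:=s^{-1}(v)$. By Lemma~\ref{l-level}, $L\subset\mathcal{C}$, so every $q\in\partial U$ is a critical point of $s$. Specializing \eqref{l-3-3} at such a $q$ (where $\|\nabla s\|^2=0$) yields
\[
\Delta s(q)=-2v(2+v)<0,
\]
which forbids $q$ from being a local minimum of $s$. Hence every neighborhood of $q$ contains points with $s<v$; since $q\in\partial U$, it also contains points with $s>v$. By continuity, $q$ is a non-isolated point of the level set $L$.

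Next I would invoke the real analyticity of $s=\|A\|^2$, which holds because $A$ is the real part of a holomorphic quadratic differential on $S$ (cf.\ Remark~\ref{remark-1}). A standard consequence of the Weierstrass preparation theorem in two variables is that the zero locus of a nonzero real-analytic function, near a non-isolated zero, decomposes into a finite union of real-analytic arcs through that point. Applied to $s-v$ at $q$, this shows that $L$ contains a smooth arc through $q$. Lemma~\ref{l-maximal} then forces $s(q)=\max_{S}s$, contradicting $s(q)=v<\max_{S}s$. This completes the proof.

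The main obstacle is the analytic-set step identifying a smooth arc of $L$ through $q$; everything else is a formal consequence of Lemma~\ref{l-level}, Lemma~\ref{l-maximal}, and the critical-point form of \eqref{l-3-3}. An alternative route that avoids Weierstrass preparation is to split according to whether $p$ itself lies on an arc of $L$ — in which case Lemma~\ref{l-maximal} applies directly — or $p$ is isolated in $L$. In the latter case, on a punctured neighborhood of $p$ the connected function $s-v$ has constant sign, so $p$ is a strict local extremum; since $\Delta s(p)<0$ precludes a strict local minimum, $p$ is a strict local maximum, and the path argument embedded in the proof of Lemma~\ref{l-maximal} then promotes $s(p)$ to the global maximum.
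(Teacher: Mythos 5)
Your argument is correct, and it reaches the conclusion by a route that is organized differently from the paper's. The paper classifies the critical point $p$ itself: it notes from \eqref{l-3-7} that $\Hess s$ cannot vanish identically at a nonzero critical point, cites \cite{AP} to conclude that arcs in level sets are smooth, disposes of saddle points via Lemma~\ref{l-maximal}, and implicitly handles local maxima by the path argument already embedded in the proof of that lemma. You instead sidestep any classification of $p$ by passing to the boundary of the super-level set $U=\{s>v\}$: Lemma~\ref{l-level} puts $\partial U$ inside $\mathcal{C}$, the sign $\Delta s=-2v(2+v)<0$ from \eqref{l-3-3} rules out a local minimum at a boundary point $q$, and the resulting two-sided behavior of $s-v$ forces $q$ to be a non-isolated point of the level set, whence real analyticity supplies the arc needed for Lemma~\ref{l-maximal}. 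This is arguably cleaner, since it never needs to decide whether $p$ is a saddle or an extremum and it localizes all the work at a point where non-isolatedness is automatic; the price is that you invoke the general local structure theory of real-analytic zero sets (Weierstrass/Puiseux) rather than the paper's narrower fact that level arcs are smooth when the Hessian does not vanish identically. Two small points of hygiene: the analytic branches of $\{s=v\}$ through $q$ may be singular at $q$ itself (e.g.\ cusps), so you should extract a smooth subarc \emph{near} $q$ rather than ``through'' $q$ --- this costs nothing since Lemma~\ref{l-maximal} only needs $\mathcal{C}$ to contain some smooth arc; and your closing ``alternative route'' is essentially the paper's own argument, including its reliance on the path-and-iterate step inside the proof of Lemma~\ref{l-maximal}, so it is less independent than the main line of your proof.
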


\begin{proof}
First, the function $s$ is real-analytic on $S$ because the second fundamental form $A$ can be viewed as the real part of a holomorphic quadratic form on $S$ \cite{Hopf}.
Secondly \eqref{l-3-7} shows that at non-zero critical points, we have that the Hessian $\Hess s$ does not vanish identically. Hence, any arc in a level subset of $s$ is smooth (one may see \cite{AP} for more details). As noted in the proof of Lemma~\ref{l-maximal}, if $p \in \mathcal{C}$, then if $p$ is a saddle point, then the $s(p)$-level set of $s$ must contain a smooth arc, and hence $s$ attains its global maximum at $p$.  Since $s \geq 0$ but has zeroes at only the (finitely many) zeroes of $A$ (see Remark (\ref{remark-1})), we see that the only critical values obtainable are either global maxima or zeroes (global minima): these account for all the critical points in $\mathcal{C}$.
\end{proof}

We are now ready to prove Theorem \ref{thm: main}.
\bp[Proof of Theorem \ref{thm: main}]
We have not established that $s: S \to \R$ is a global Morse function, so we cannot immediately apply  Morse theory to conclude the argument.  Our argument is hence just a bit more involved.

First since $S$ is a minimal surface in a three-dimensional hyperbolic space, the Gauss equation gives that the Gauss curvature $K_S$ of $S$ is less than $-1$. In particular $S$ is a closed surface of genus $g\geq 2$, which is not simply connected. 

On the other hand, let $m=\max_{q\in S}s(q)$. Since $s$ is analytic on $S$ (note once again that the second fundamental form $A$ can be viewed as the real part of a holomorphic quadratic form on $S$ \cite{Hopf}), and because \eqref{l-3-7} shows that at critical points we have that $\Hess s$ does not vanish identically, it follows (see also \cite[Lemma 3]{AP}) that the level set $s^{-1}(m)$ consists only of a finite number of isolated points and a finite number of circles. Thus, one may choose a \nbhd \ $V_1$ of $s^{-1}(m)$ such that $V_1$ is a collection of disks and annuli. Set
\[V_2=S \setminus \{s^{-1}(m)\}.\]
From Corollary \ref{critical}, the only critical points of $s$ on $V_2$ are (finite) zeroes (absolute minima), so it follows from the standard Morse theory \cite{Milnor-M} 
that $V_2$ is topologically trivial. That is, the open set $V_2$ is homeomorphic to a two-dimensional disk. In particular, the Euler characteristic $\chi(V_2)=1$. Since $S=V_1 \cup V_2$, we have that the Euler characteristic $\chi(S)$ may be estimated by

\begin{align*}
\chi(S) &= \chi(V_1) + \chi (V_2)-\chi(V_1 \cap V_2)\\
&= \chi(V_1) + \chi (V_2) \\
&\geq 0 +1\\
&=1.
\end{align*}
Here the second equality follows from properties of the Euler characteristic $\chi$ when one decomposes a surface into subsurfaces and that the intersection $V_1 \cap V_2$ is homotopic to a collection of circles, each of which contributes zero to the sum. The inequality follows because $V_1$ is a collection of finite disks and finite annuli. We conclude that the orientable surface $S$ must have genus zero, contradicting the conclusion of our first paragraph.
\ep

%%%%%%%%%%%%%%%%%%%%%%%

\section{A nontrivial example for minimal disk foliation in $\H ^3$} \label{sec: example}
The argument just above finishing the proof of Theorem~\ref{thm: main} could be construed to leave open the possibility of a geometric foliation by minimal (topological) punctured spheres (i.e. disks).  In this concluding section, we exhibit a not-quite-trivial family, suggesting a sharpness to our result.

We begin by noting the trivial example: consider $\H^3$ as the upper half-space with coordinate $(x,y,z)$ endowed with the standard hyperbolic metric $ds^2=\frac{dx^2+dy^2+dz^2}{z^2}$. It is clear that the family $\{(t, y, z); y \in \R, z \in \R^{>0}\}_{t\in (\frac{-1}{2},\frac{1}{2})}$ is a foliation by minimal (actually totally geodesic) surfaces. 

Leaving this trivial example aside, we remark in the remainder of this section on a different minimal foliation whose leaves are not totally geodesic. We use the same notations as in \cite{Kok-97}, whose example  Example 7.2 in \cite{Kok-97} we adapt for our purpose.

Consider the three-dimensional hyperbolic space $(\R^3, ds^2)$ with Fermi coordinates $(t,x,y)$ where
\[ds^2=dt^2+e^{-2t}(dx^2+dy^2).\]

Define
\beqar
f: \R^2 \times (0,1) &\to& (\R^3, ds^2) \\
   ((u,v),t) &\mapsto& (\rho(u),t \cdot \int e^{2\rho(u)}du,v)
\eeqar
where $\rho(u)$ solves the ODE  $$\frac{d\rho}{du}=(e^{-2\rho}-t^2 e^{2\rho})^{\frac{1}{2}}.$$

Kokubu \cite[Page 377]{Kok-97} shows that for each $t$, the image $\{f((\cdot, \cdot), t)\}$, denoted by $\Sigma_t$, is a minimal surface in $(\R^3, ds^2)$. Thus, the family $\Sigma_t$ is a minimal foliation. We will show that this minimal foliation is geometric and none of the leaves is totally geodesic.\\

Fix $t$; then a direct computation gives that
\beqar
\frac{\partial f}{\partial u}=(\frac{d\rho}{du}, t e^{2\rho(u)}, 0) \quad \emph{and} \quad
\frac{\partial f}{\partial v}=(0, 0, 1).
\eeqar

Then,
\beqar
<\frac{\partial f}{\partial u},\frac{\partial f}{\partial u}>&=&(\frac{d\rho}{du})^2+e^{-2\rho(u)}\cdot (t e^{2\rho(u)})^2=e^{-2\rho(u)}\\
<\frac{\partial f}{\partial u},\frac{\partial f}{\partial v}>&=&0 \\ 
<\frac{\partial f}{\partial v},\frac{\partial f}{\partial v}>&=& e^{-2 \rho(u)}.
\eeqar

Thus, the induced metric $ds^2_{\Sigma_t}$ on $\Sigma_t$ is 
\[ds^2_{\Sigma_t}=e^{-2\rho(u)}\cdot (du^2+dv^2), \]

and the unit normal vector $\vec{n}$ of $ds^2_{\Sigma_t}$ is 
\[\vec{n}=\frac{(-t,\frac{d\rho}{du},0)}{\sqrt{t^2+e^{-2\rho(u)}(\frac{d\rho}{du})^2}}=e^{2\rho(u)}\cdot (-t,\frac{d\rho}{du},0).\]

A direct computation gives that the Gauss curvature $K(\Sigma_t)$ of $ds^2_{\Sigma_t}$ is 
\[K(\Sigma_t)=-1-t^2 e^{2\rho(u)}.\]
Since $t\in (0,1)$, we have $\Sigma_t$ is not totally geodesic in $(\R^3, ds^2)$.\\

As usual, let $s=|A|^2$ be square of the norm of the second fundamental form $A$ of $\Sigma_t$ in $(\R^3, ds^2)$. The Gauss equation gives that 
\beqar
s=|A|^2=2\cdot (-1-K(\Sigma_t))=2t^2 e^{2\rho(u)}.
\eeqar

The derivative of  $\Sigma_t$ in the $t$-direction is 
\[\frac{\partial f}{\partial t}=(0, \int e^{2\rho(u)}du, 0).\]

Then,
\[<\frac{\partial f}{\partial t},\vec{n}>=e^{-2\rho}\int e^{2\rho(u)}du \cdot e^{2\rho(u)}\frac{d\rho}{du}=\int e^{2\rho(u)}du \cdot(e^{-2\rho}-t^2 e^{2\rho})^{\frac{1}{2}},\]
which is denoted by $F(u,t)$.

Since $\rho$ is increasing with respect to $u$ and $s=2\cdot (-1-K(\Sigma_t))=2t^2 e^{2\rho(u)}$, we may also write $F(u,t)$ as $F(s,t)$ which is a function only depending on $s$ and $t$. Hence,

\[(\frac{\partial \Sigma_t}{\partial t})^{\perp}=F(s,t) \cdot \vec{n}.\] 

Therefore, the family $\Sigma_t$ is a geometric minimal foliation whose leaves are not totally geodesic.

%%%%%%%%%%%%%%%%%%%%%%%%%%%%%
\bibliographystyle{amsalpha}
\bibliography{wp}
\end{document}